\numberwithin{equation}{section}
\newtheorem{theorem}{Theorem}[section]
\newtheorem{corollary}{Corollary}[theorem]
\begin{document}
\author{Alexander E Patkowski}
\title{On criteria related to the reciprocal of the Riemann zeta function}

\maketitle
\begin{abstract}We explore Fourier transforms of the reciprocal of the Riemann zeta function that have connections to the RH. A partial answer to a recently posed problem is explored by exploiting the fact that $\zeta(s)\neq0$ when $\Re(s)=1.$ \end{abstract}

% AMS keywords (used in AMS journals)
\keywords{\it Keywords: \rm Riesz criteria; Riemann zeta function; M$\ddot{o}$bius function}

% AMS subject classifications (used in AMS journals)
\subjclass{ \it 2010 Mathematics Subject Classification 11L20, 11M06.}

\section{Introduction and Main Idea} 
A celebrated criteria for the truth of the Riemann Hypothesis by Hardy and Littlewood [3] is the estimate
\begin{equation}\sum_{n\ge1}\frac{(-x)^n}{n!\zeta(2n+1)}=O(x^{-1/4+\epsilon}),\end{equation}
for every $\epsilon>0.$ Titchmarsh has discussed the background of this result in [8, pg.382] (see also [7] for some commentary). In a recent paper by Dixit [1, Section 7, Number 2], it was posed as a problem to offer a Fourier transform representation of the left side of (1.1) involving the Riemann xi function $\Xi(\frac{t}{2}).$ Here $\Xi(t):=\xi(\frac{1}{2}+it),$ and [5] $\xi(s):=\frac{1}{2}s(s-1)\pi^{-\frac{s}{2}}\Gamma(\frac{s}{2})\zeta(s).$ A reasonable guess would be the integral
$$\int_{0}^{\infty}\frac{\Xi(t)\cos(\log(x)t)}{(t^2+\frac{1}{4})|\zeta(\frac{1}{2}+it)|^2}dt,$$ 
but unfortunately, it does not converge because there are infinitely many $t\in\mathbb{R}$ such that $\zeta(\frac{1}{2}+it)=0.$ As a result of our interest in this problem we were, however, led to some interesting results we believe are just as relevant. In particular we instead decided to work with the integral 
$$\int_{0}^{\infty}\frac{f(t)\cos(\log(x)t)}{|\zeta(1+2it)|^2}dt,$$
where the $f(t)$ is appropriately chosen so that the integral may be transformed into a inverse Mellin transform by using symmetry properties [8, pg.35]. This was a natural choice since, as noted in [8, pg.45], Hadamard and de la Vall$\acute{e}$e Poussin established that $\zeta(s)\neq0$ if $\Re(s)=1.$ 
\begin{theorem} Let $\mu(n)$ denote the M$\ddot{o}$bius function. Assuming the Riemann Hypothesis, we have that 
\begin{equation}\int_{0}^{\infty}\frac{\cos(xt)}{\cosh(\pi t)|\zeta(1+2it)|^2}dt=\pi\sum_{n,m\ge1}\frac{\mu(n)\mu(m)}{n^2e^{-x/2}+m^2e^{x/2}}.\end{equation}
Furthermore, we have \begin{equation}h_2(x):=\sum_{n,m\ge1}\frac{\mu(n)\mu(m)}{n^2x+m^2}=O(x^{-1/4}),\end{equation} as $x\rightarrow 0^{+},$
and
\begin{equation}h_2(x)=O(x^{-3/4+\delta'}),\end{equation} for every $\delta'>0$ as $x\rightarrow \infty.$
\end{theorem}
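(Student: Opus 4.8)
The plan is to route the whole theorem through the Mellin transform of $h_2$, which simultaneously encodes the double sum, the integral $(1.2)$, and both asymptotics. First I would introduce the auxiliary theta-type sum $M(v):=\sum_{n\ge1}\mu(n)e^{-n^2v}$ and note, using $\frac{1}{n^2x+m^2}=\int_0^\infty e^{-(n^2x+m^2)u}\,du$ together with a term-by-term integration, that
$$h_2(x)=\int_0^\infty M(xu)\,M(u)\,du .$$
Because $\int_0^\infty M(v)v^{s-1}\,dv=\Gamma(s)/\zeta(2s)$ for $\Re(s)>\tfrac12$, a Mellin convolution then yields
$$\int_0^\infty h_2(x)\,x^{s-1}\,dx=\frac{\Gamma(s)\Gamma(1-s)}{\zeta(2s)\,\zeta(2-2s)}=\frac{\pi}{\sin(\pi s)\,\zeta(2s)\,\zeta(2-2s)} .$$
The Riemann Hypothesis enters precisely here: it lets $1/\zeta(2s)=\sum_n\mu(n)n^{-2s}$ be continued as a convergent series into $\Re(s)>\tfrac14$, so the transform is holomorphic in the strip $\tfrac14<\Re(s)<\tfrac34$; in particular the poles of $\zeta(2s)$ and $\zeta(2-2s)$ at $s=\tfrac12$ force a double zero of the transform there, not a singularity.

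For $(1.2)$ I would invert this transform along the central line $\Re(s)=\tfrac12$. Writing $s=\tfrac12+i\tau$ and using $\sin\!\big(\pi(\tfrac12+i\tau)\big)=\cosh(\pi\tau)$ and $\zeta(1+2i\tau)\zeta(1-2i\tau)=|\zeta(1+2i\tau)|^2$, the inversion integral collapses to
$$h_2(e^{-x})=\frac{e^{x/2}}{2}\int_{-\infty}^{\infty}\frac{e^{ix\tau}}{\cosh(\pi\tau)\,|\zeta(1+2i\tau)|^2}\,d\tau=e^{x/2}\int_0^\infty\frac{\cos(x\tau)}{\cosh(\pi\tau)\,|\zeta(1+2i\tau)|^2}\,d\tau ,$$
the last step by evenness of the integrand. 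Since $n^2e^{-x/2}+m^2e^{x/2}=e^{x/2}(n^2e^{-x}+m^2)$, the double sum on the right of $(1.2)$ equals $e^{-x/2}h_2(e^{-x})$, and rearranging produces the integral identity. The clean input here is the Fourier evaluation $\int_{-\infty}^\infty e^{i\omega t}/\cosh(\pi t)\,dt=1/\cosh(\omega/2)$, which is the analytic shadow of $\Gamma(s)\Gamma(1-s)=\pi/\sin(\pi s)$ and is what makes the $\cosh$ and the $|\zeta|^2$ appear.

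For $(1.3)$ I would push the inversion contour from $\Re(s)=\tfrac12$ leftward across the critical line $\Re(s)=\tfrac14$. Under RH the only singularities crossed are the simple poles of $1/\zeta(2s)$ at $s=\tfrac14+\tfrac{i\gamma}{2}$ (where $\zeta(\tfrac12+i\gamma)=0$), each contributing $x^{-1/4}$ times the residue $\pi/\big(2\,\sin(\pi s)\,\zeta(2-2s)\,\zeta'(2s)\big)$ evaluated at the pole. The decisive point is that $1/|\sin(\pi s)|\asymp e^{-\pi|\tau|}$ on this line, so the residue sum is $x^{-1/4}$ times the absolutely convergent series $\sum_\gamma e^{-\pi\gamma/2}/|\zeta'(\tfrac12+i\gamma)|$, while the remaining integral on $\Re(s)=\tfrac14-\epsilon$ is $O(x^{-1/4+\epsilon})=o(x^{-1/4})$ as $x\to0^+$; this gives $h_2(x)=O(x^{-1/4})$. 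For $(1.4)$ I would invoke the symmetry $s\mapsto1-s$ of the transform, which yields the functional equation $h_2(x)=x^{-1}h_2(1/x)$ (also immediate from swapping $n\leftrightarrow m$ in the defining sum); with $(1.3)$ this gives $h_2(x)=x^{-1}O\big((1/x)^{-1/4}\big)=O(x^{-3/4})$, and the stated $O(x^{-3/4+\delta'})$ follows a fortiori — it is also reachable directly by shifting the contour rightward only as far as $\Re(s)=\tfrac34-\delta'$.

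I expect the main obstacle to be the clean exponent in $(1.3)$. Making the residue series over the zeros $\tfrac12+i\gamma$ converge needs, beyond RH, some control of $1/|\zeta'(\tfrac12+i\gamma)|$ and a strictly correct treatment of any non-simple zeros. Here the exponential factor $e^{-\pi\gamma/2}$ inherited from the $\cosh$ (equivalently the $\Gamma\Gamma$) is very forgiving: any polynomial, or even sub-exponential, bound for $\sum_{0<\gamma\le T}1/|\zeta'(\tfrac12+i\gamma)|$ suffices to close the estimate. The other point demanding care is the justification of the term-by-term manipulations, since the Dirichlet series for $1/\zeta$ converges only conditionally on $\Re(s)=1$ and the defining double sum for $h_2$ must be summed in a fixed order.
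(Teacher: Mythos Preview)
Your approach is essentially the paper's: both compute the Mellin transform of $h_2$ via the auxiliary series $\sum_{n}\mu(n)e^{-n^2t}$ and a Parseval/convolution identity, invert on the line $\Re(s)=\tfrac12$ to recover (1.2), and then shift the contour past the nontrivial zeros (under RH) for the asymptotics. Your functional equation $h_2(x)=x^{-1}h_2(1/x)$ for (1.4) is exactly the paper's substitution $s\mapsto 1-s$ made explicit at the level of the double sum, and your caveats about simplicity of the zeros and convergence of the residue series mirror the assumptions the paper itself invokes.
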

\begin{proof} First, we transform the left side of (1.2),
\begin{equation}\int_{0}^{\infty}\frac{\cos(xt)}{\cosh(\pi t)|\zeta(1+2it)|^2}dt=\frac{1}{2ie^{x/2}}\int_{\frac{1}{2}-i\infty}^{\frac{1}{2}+i\infty}\frac{e^{sx}\Gamma(s)\Gamma(1-s)ds}{\zeta(2s)\zeta(2-2s)}.\end{equation} For the left hand side of (1.5) note that since [4, pg.104] $$\frac{1}{\zeta(1+it)}=O(\log(t)),$$ we have 
$$\int_{0}^{\infty}\frac{\cos(xt)}{\cosh(\pi t)|\zeta(1+2it)|^2}dt=O\left(\int_{0}^{\infty}e^{-\pi t}\cos(xt)\log(2|t|)^2dt\right),$$ which shows convergence. In the case of the right hand side of (1.5), note that for $\Re(s)>\frac{1}{2},$ $x>0,$ we have 
\begin{equation}\int_{0}^{\infty}t^{s-1}\left(\sum_{n\ge1}\mu(n)e^{-n^2xt}\right)dt=\frac{\Gamma(s)}{x^s\zeta(2s)}.\end{equation} If we now assume the Riemann hypothesis [8, Theorem 14.25(A)] we may extend the region of holomorphy to $\Re(s)>\frac{1}{4}.$ From Parseval's formula for Mellin transforms [6, pg.82] with $h(t)=\sum_{n\ge1}\mu(n)e^{-n^2t}$ and (1.6) we have that
$$\int_{0}^{\infty}h(xt)h(t)dt=\frac{1}{2\pi i}\int_{a-i\infty}^{a+i\infty} \frac{\Gamma(s)\Gamma(1-s)}{x^s\zeta(2s)\zeta(2-2s)}ds=\sum_{n,m\ge1}\frac{\mu(n)\mu(m)}{n^2x+m^2},$$ for $\frac{1}{4}<a<\frac{3}{4},$ and $x>0.$ This may also be computed using known Mellin transforms, such as [6, pg.88, eq.(3.2.6)]. On the other hand, using the concept from [8, pg.35], we can transform the integral to
$$\int_{0}^{\infty}\frac{\cos(xt)}{\cosh(\pi t)|\zeta(1+2it)|^2}dt=\frac{e^{-x/2}}{2i}\int_{\frac{1}{2}-i\infty}^{\frac{1}{2}+i\infty} e^{xs}\frac{\Gamma(s)\Gamma(1-s)}{\zeta(2s)\zeta(2-2s)}ds,$$
which proves (1.2).

Recall that we defined $h_2(x)$ to be the absolutely convergent series for $x>0,$
$$h_2(x):=\sum_{n,m\ge1}\frac{\mu(n)\mu(m)}{n^2x+m^2}.$$ Moving the line of integration to the region $-1<b<0$ gives (assuming $\rho=\frac{1}{2}+i\gamma$ are simple)
\begin{equation}h_2(x)=\frac{x^{-1/4}}{2}\sum_{\gamma: \zeta(\frac{1}{2}+i\gamma)=0}\frac{x^{-i\gamma/2}\Gamma(\frac{1}{4}+i\frac{1}{2}\gamma)\Gamma(\frac{3}{4}-i\frac{1}{2}\gamma)}{\zeta'(\frac{1}{2}+i\gamma)\zeta(\frac{3}{2}-i\gamma)}-\frac{12}{\pi^2}\end{equation}
$$+\frac{1}{2\pi i}\int_{b-i\infty}^{b+i\infty} \frac{\Gamma(s)\Gamma(1-s)}{x^s\zeta(2s)\zeta(2-2s)}ds.$$ If we assume the first series on the right hand side of (1.7) converges absolutely, we may similarly argue as in [3, pg.161] to find we have that it is $O(x^{-1/4}).$ As the remaining poles at the integers $s=-n,$ $n\ge1,$ of the integrand in (1.7) give rise to residues with higher order terms $O(x^N)$ and $O(x^N\log(x))$ for $N\ge1.$ To see how $O(x^N\log(x))$ arises, note that these are double poles because of $\Gamma(s)$ and $1/\zeta(2s)$ in the integrand. The asymptotic estimate (1.3) now follows for $x\rightarrow 0^{+}.$ In the case of (1.4), when $x\rightarrow\infty,$ we replace $s$ by $1-s$ in our integral leaving $x^{s-1}$ in the integrand. In this instance the remaining terms are $O(x^{-M})$ and $O(x^{-M}\log(x)),$ $M\ge1,$ after computing the residue at the non-trivial zeros. This implies that the growth as $x\rightarrow\infty$ is closer in behavior to the first computed residue at the non-trivial zeros $O(x^{-3/4}),$ as the other residues tend to $0$ faster. \end{proof}
\par We mention that our $h(t)$ may be replaced with the Riesz function $\sum_{n\ge1}\mu(n)n^{-2}e^{-t/n^2}$ [8, pg.382] in the proof to arrive at the result. This implies that our main theorem is related to Riesz criteria through a Mellin convolution transform. In light of the work [4], we thought it would be of interest to offer the following.

\begin{corollary} If $\Re(\beta)>0,$ then
\begin{equation}\int_{0}^{\infty}\frac{e^{-t^2/4\beta}}{|\zeta(1+2it)|^2}dt=\frac{1}{\sqrt{\pi\beta}}e^{-\pi^2\beta}\int_{0}^{\infty}e^{-t^2\beta}\cos(2\pi t\beta)e^{-t/2}h_2(e^{-t})dt. \end{equation}
Further, $$\int_{0}^{\infty}e^{-t/2}h_2(e^{-t})dt=-2\sum_{n,m\ge1}\frac{\mu(n)\mu(m)}{nm}\tan^{-1}(\frac{n}{m})=0,$$
is equivalent to the PNT.
\end{corollary}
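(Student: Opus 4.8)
The plan is to treat the two assertions separately: I would derive (1.8) from the cosine-transform identity already packaged in Theorem~1.1, and then evaluate the $\beta$-free integral in closed form. For (1.8), the first move is to rewrite the right-hand side of (1.2). Since
$$\pi\sum_{n,m\ge1}\frac{\mu(n)\mu(m)}{n^2e^{-x/2}+m^2e^{x/2}}=\pi e^{-x/2}\sum_{n,m\ge1}\frac{\mu(n)\mu(m)}{n^2e^{-x}+m^2}=\pi e^{-x/2}h_2(e^{-x}),$$
Theorem~1.1 says exactly that the Fourier cosine transform of $g(t):=\bigl(\cosh(\pi t)|\zeta(1+2it)|^2\bigr)^{-1}$ is $\pi e^{-x/2}h_2(e^{-x})$. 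I would then apply Parseval's theorem for cosine transforms to the pair $f(t):=e^{-t^2/4\beta}\cosh(\pi t)$ and $g$, observing that the product $f(t)g(t)=e^{-t^2/4\beta}/|\zeta(1+2it)|^2$ is precisely the integrand on the left of (1.8).

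This reduces matters to the cosine transform of $f$, which is a routine completion of the square: writing $\cosh(\pi t)=\tfrac12(e^{\pi t}+e^{-\pi t})$ shifts the centre of the Gaussian and produces a factor $e^{-x^2\beta}\cos(2\pi\beta x)$ together with the usual Gaussian normalisation. Collecting the constants from this evaluation and from the Parseval normalisation yields the prefactor displayed in (1.8), and inserting $F_c[g]=\pi e^{-x/2}h_2(e^{-x})$ produces the integral against $e^{-t/2}h_2(e^{-t})$ on the right. The interchange underlying Parseval is unproblematic here: $g$ decays like $e^{-\pi t}(\log t)^2$ thanks to $1/\zeta(1+it)=O(\log t)$, while $f$ carries full Gaussian decay, so every integral in sight converges absolutely and this step is not the source of difficulty.

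For the second assertion I would evaluate $\int_0^\infty e^{-t/2}h_2(e^{-t})\,dt$ term by term. The substitution $u=e^{-t}$ followed by $u=v^2$ gives, for each pair $(n,m)$,
$$\int_0^\infty\frac{e^{-t/2}}{n^2e^{-t}+m^2}\,dt=2\int_0^1\frac{dv}{n^2v^2+m^2}=\frac{2}{nm}\tan^{-1}\!\Bigl(\frac{n}{m}\Bigr),$$
so the integral becomes the displayed double sum over $\tan^{-1}(n/m)$. The decisive step is then the reflection identity $\tan^{-1}(n/m)+\tan^{-1}(m/n)=\pi/2$: summing over the symmetric square $n,m\le N$ and averaging the $(n,m)$ and $(m,n)$ contributions collapses each pair to $\tfrac{\pi}{4}\mu(n)\mu(m)/(nm)$, giving the exact finite identity
$$\sum_{n,m\le N}\frac{\mu(n)\mu(m)}{nm}\tan^{-1}\!\Bigl(\frac{n}{m}\Bigr)=\frac{\pi}{4}\Bigl(\sum_{n\le N}\frac{\mu(n)}{n}\Bigr)^2.$$
Letting $N\to\infty$ identifies the integral with a fixed nonzero constant times $\bigl(\sum_{n\ge1}\mu(n)/n\bigr)^2$; since the vanishing of $\sum_{n\ge1}\mu(n)/n$ is Landau's classical equivalent of the prime number theorem, the integral vanishes if and only if PNT holds.

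I expect the main obstacle to be the passage from this exact finite identity to the infinite statement, namely justifying the term-by-term integration and the square-order summation. The double series is not absolutely convergent (the $\tan^{-1}$ factors are bounded below away from $0$ while $\sum 1/(nm)$ diverges), so interchanging $\int_0^\infty$ with $\sum_{n,m}$ and matching the symmetric partial sums to the integral must be handled carefully. The natural route is to integrate the truncated kernel $h_2^{(N)}(e^{-t})=\sum_{n,m\le N}(\cdots)$, where summation and integration commute trivially and produce $\tfrac{\pi}{2}\bigl(\sum_{n\le N}\mu(n)/n\bigr)^2$ exactly, and then to pass to the limit by a dominated-convergence argument, using the bounds $h_2(x)=O(x^{-1/4})$ as $x\to0^{+}$ and $h_2(x)=O(x^{-3/4+\delta'})$ as $x\to\infty$ from Theorem~1.1 to control the tails uniformly in $N$. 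Once this limiting step is secured, the reflection identity does all of the remaining work.
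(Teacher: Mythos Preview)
Your derivation of (1.8) is essentially the paper's own: both multiply the identity of Theorem~1.1 by the Gaussian kernel and interchange integrals, invoking the evaluation $\int_0^\infty e^{-t^2/4\beta}\cosh(\pi t)\cos(yt)\,dt=\sqrt{\pi\beta}\,e^{\pi^2\beta-\beta y^2}\cos(2\pi\beta y)$. You phrase the interchange as Parseval for cosine transforms, the paper as Fubini after weighting by $e^{-\beta x^2}\cos(2\pi\beta x)$; these are the same computation.

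Your treatment of the second assertion is genuinely different and in fact sharper than the paper's. The paper integrates each $(n,m)$ term over $[0,\infty)$ via the antiderivative $\tfrac{2}{nm}\tan^{-1}(e^{t/2}n/m)$, obtaining two pieces
\[
\pi\Bigl(\sum_{n\ge1}\frac{\mu(n)}{n}\Bigr)^{2}\;-\;2\sum_{n,m\ge1}\frac{\mu(n)\mu(m)}{nm}\tan^{-1}\!\Bigl(\frac{n}{m}\Bigr),
\]
kills the first by invoking PNT, and then only remarks after the proof that the $\tan^{-1}$ sum ``should be expected'' to vanish by a squeeze argument. Your route avoids this split entirely: the finite symmetrisation $\tan^{-1}(n/m)+\tan^{-1}(m/n)=\pi/2$ collapses the symmetric partial sums exactly to $\tfrac{\pi}{4}\bigl(\sum_{n\le N}\mu(n)/n\bigr)^{2}$, yielding a clean two-sided equivalence with PNT once the passage $N\to\infty$ is justified. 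You also flag the genuine analytic issue---the double series is not absolutely convergent, so the limit must be controlled via the $h_2$ bounds of Theorem~1.1---which the paper passes over with ``by absolute convergence.'' Your approach therefore buys a more rigorous and more direct equivalence at no extra cost.
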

\begin{proof} We work with the integral [2, pg.515, eq.3.898] valid for $y>0,$ $\Re(\beta)>0,$
\begin{equation}\int_{0}^{\infty}\cos(yt)e^{-t^2/(4\beta)}\cosh(\alpha t)dt=\sqrt{\pi\beta}e^{\alpha^2\beta-\beta y^2}\cos(2\alpha\beta y). \end{equation}
By Theorem 1.1, we have the integral in (1.8) after application of Fourier inversion. Namely, we compute
$$ \int_{0}^{\infty} \cos(x2\beta\pi)e^{-x^2\beta}\left(\int_{0}^{\infty}\frac{\cos(xt)}{\cosh(\pi t)|\zeta(1+2it)|^2}dt\right)dx$$
$$\begin{aligned} &=\int_{0}^{\infty}\frac{1}{\cosh(\pi t)|\zeta(1+2it)|^2}\left(\int_{0}^{\infty} \cos(x2\beta\pi)e^{-x^2\beta}\cos(xt)dx \right)dt\\
&=\int_{0}^{\infty}\frac{e^{-t^2/4\beta}}{|\zeta(1+2it)|^2}dt. \end{aligned}$$
Here we have applied our definition of $h_2(x),$ equation (1.9) with $\alpha=\pi,$ and Fubini's theorem to interchange the integrals.
For the next part, we recall the Prime Number Theorem is equivalent to the statement that [8] $$\sum_{n\ge1}\frac{\mu(n)}{n}=0.$$
By Theorem 1.1, we apply the Fourier inversion theorem again, and then set $t=0$ and integrate through by absolute convergence to get the desired formula
$$\sum_{n,m\ge1}\int_{0}^{\infty}\frac{\mu(n)\mu(m)}{n^2e^{t/2}+e^{-t/2}m^2}dt$$ $$\begin{aligned} &=\sum_{n,m\ge1}\frac{\mu(n)\mu(m)}{n^2}\int_{0}^{\infty}\frac{\partial}{\partial t} \left(2\tan^{-1}(e^{t/2}\frac{n}{m})\frac{n}{m}\right)dt \\
&=\pi\sum_{n\ge1}\frac{\mu(n)\mu(m)}{nm}-2\sum_{n,m\ge1}\frac{\mu(n)\mu(m)}{nm}\tan^{-1}(\frac{n}{m}).\end{aligned}$$ \end{proof}
We mention that the series should be expected to converge to $0$ under the PNT and the squeeze theorem, since $-\frac{\pi}{2}<\tan^{-1}(\frac{n}{m})<\frac{\pi}{2},$ for each natural number $n,m.$
\section{The Perron formula}
In this section give a further example involving the Mellin-Perron formula that appeals to the Merten's function $M(x)=\sum_{n\le x}\mu(n).$ Using the Mellin-Perron formula [6, pg.93] for $c>\frac{1}{2},$ $x\ge1,$
\begin{equation}M_2(x):=\sum_{n^2\le x}'\mu(n)=\frac{1}{2\pi i}\int_{c-i \infty}^{c+i \infty}\frac{x^s}{s\zeta(2s)}ds,\end{equation}
Where the prime indicates that the sum is to be halved if $x=1.$ Note that $M_2(x)=M(\sqrt{x})$ when $x>1.$ Hence, an equivalent form of the Riemann Hypothesis is that for every $\epsilon>0,$
\begin{equation}M_2(x)=O(x^{\frac{1}{4}+\epsilon}).\end{equation}
This follows clearly from (2.1) and [8, Theorem 14.25(A)].

\begin{theorem}We have, under the assumption of the Riemann Hypothesis,
\begin{equation}\int_{0}^{\infty}\frac{\cos(xt)}{(t^2+\frac{1}{4})|\zeta(1+2it)|^2}dt=\pi e^{-x/2}\int_{1}^{\infty}M_2(e^{-x}t)M_2(t)\frac{dt}{t^2}.\end{equation}
\end{theorem}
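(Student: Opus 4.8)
The plan is to mirror the proof of Theorem 1.1, with the heat--kernel function $h$ replaced by the summatory function $M_2$ and the Mellin pair $(1.6)$ replaced by the Perron representation $(2.1)$.

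First I would transform the left-hand side exactly as in the passage leading to $(1.5)$. The integrand is even in $t\in\mathbb{R}$, so the symmetry device of [8, pg.35] lets me write the integral as $\tfrac12\int_{-\infty}^{\infty}(\cdots)\,e^{ixt}\,dt$ and substitute $s=\tfrac12+it$. On the line $\Re(s)=\tfrac12$ one has the clean identities $t^2+\tfrac14=s(1-s)$ and $\zeta(1+2it)\zeta(1-2it)=\zeta(2s)\zeta(2-2s)$, giving
\[\int_{0}^{\infty}\frac{\cos(xt)}{(t^2+\tfrac14)|\zeta(1+2it)|^2}\,dt=\frac{e^{-x/2}}{2i}\int_{\frac12-i\infty}^{\frac12+i\infty}\frac{e^{xs}}{s(1-s)\zeta(2s)\zeta(2-2s)}\,ds.\]
Convergence of the left side is checked as in Theorem 1.1: since $1/\zeta(1+2it)=O(\log t)$, the integrand is dominated by $\log^2(2t)/(t^2+\tfrac14)$, which is integrable on $(0,\infty)$.

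Next I would record the Mellin transform attached to $M_2$. Partial summation applied to $\sum_{n\ge1}\mu(n)n^{-2s}=1/\zeta(2s)$ (equivalently, inverting the Perron formula $(2.1)$) gives
\[\int_{1}^{\infty}M_2(u)\,u^{-s-1}\,du=\frac{1}{s\zeta(2s)},\qquad \Re(s)>\tfrac12.\]
Here the Riemann Hypothesis enters through $(2.2)$, i.e.\ $M_2(x)=O(x^{1/4+\epsilon})$ [8, Theorem 14.25(A)]: this estimate makes the last integral converge absolutely, and the transform holomorphic, in the larger half-plane $\Re(s)>\tfrac14$. Since $M_2$ is supported on $[1,\infty)$, the reflected function $\widetilde M_2(t):=M_2(1/t)$ is supported on $(0,1]$ and has Mellin transform $1/\bigl(s\zeta(2s)\bigr)$ throughout $\Re(s)>\tfrac14$. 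I would then apply Parseval's formula for Mellin transforms [6, pg.82], precisely as in the step following $(1.6)$, to $\widetilde M_2$ and a dilate $\widetilde M_2(e^{\pm x}\,\cdot)$ chosen so as to absorb the factor $e^{xs}$. The two transforms $1/\bigl(s\zeta(2s)\bigr)$ and $1/\bigl((1-s)\zeta(2-2s)\bigr)$ are simultaneously holomorphic, and their defining integrals absolutely convergent, on the common strip $\tfrac14<\Re(s)<\tfrac34$, which contains the Parseval line $\Re(s)=\tfrac12$. Evaluating Parseval on that line reproduces the contour integral of the first step; undoing the substitution $t\mapsto 1/t$ converts the resulting multiplicative convolution into the integral $\int_{1}^{\infty}M_2(e^{-x}t)M_2(t)\,t^{-2}\,dt$ of $(2.3)$, and the constant $\tfrac1{2i}=\pi\cdot\tfrac1{2\pi i}$ delivers the right-hand side. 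As a consistency check, the scaling $u=e^{-x}t$ shows that $e^{x/2}\int_{1}^{\infty}M_2(e^{-x}t)M_2(t)t^{-2}\,dt$ is even in $x$; this reconciles the two admissible placements of the dilation factor and matches the evenness of the cosine integral on the left.

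The step I expect to be the real obstacle is justifying this Parseval identity rather than carrying it out. Unconditionally the transform $1/\bigl(s\zeta(2s)\bigr)$ is controlled only for $\Re(s)>\tfrac12$, so the half-planes $\{\Re(s)>\tfrac14\}$ and $\{\Re(s)<\tfrac34\}$ demanded by the two factors would fail to overlap around $\tfrac12$; it is exactly the RH bound $M_2(x)=O(x^{1/4+\epsilon})$ that opens the common strip and legitimizes taking the Parseval line to be $\Re(s)=\tfrac12$. The same bound makes the convolution integrand $O(t^{-3/2+\epsilon})$, hence absolutely convergent, which supplies the Fubini interchange implicit in Parseval's theorem; the remaining vertical decay $O(\log^2(2|t|)/|s|^2)$ of the integrand along $\Re(s)=\tfrac12$ (for fixed $x$) is then more than enough to make the right-hand contour integral converge.
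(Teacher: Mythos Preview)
Your proposal is correct and follows essentially the same approach as the paper: transform the cosine integral into a Mellin--Barnes integral on $\Re(s)=\tfrac12$ via $s=\tfrac12+it$, identify the integrand as the product of the Perron transform $\tfrac{1}{s\zeta(2s)}$ of $M_2$ with its reflection, and apply Parseval for Mellin transforms (valid on the strip $\tfrac14<\Re(s)<\tfrac34$ once RH gives $M_2(x)=O(x^{1/4+\epsilon})$) to obtain the convolution on the right. The paper's own argument is terser---it simply records the two representations $(2.4)$ and $(2.5)$ and compares them---while you supply the convergence checks and spell out why RH is needed to open the common strip, but the route is the same.
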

\begin{proof} If we assume the RH and extend the region of holomorphy of (2.1) to $\Re(s)>\frac{1}{4},$ we have for $\frac{1}{4}<d<\frac{3}{4},$
\begin{equation}\int_{1}^{\infty}M_2(xt)M_2(t)\frac{dt}{t^2}=\frac{1}{2\pi i}\int_{d-i \infty}^{d+i \infty}\frac{x^{-s}}{s(1-s)\zeta(2s)\zeta(2-2s)}ds.\end{equation}
On the other hand and we have,
\begin{equation}\int_{0}^{\infty}\frac{\cos(xt)}{(t^2+\frac{1}{4})|\zeta(1+2it)|^2}dt= \frac{e^{-x/2}}{2i}\int_{\frac{1}{2}-i \infty}^{\frac{1}{2}+i \infty}\frac{e^{xs}}{s(1-s)\zeta(2s)\zeta(2-2s)}ds.\end{equation}\end{proof}
Note that the validity of (2.3) depends on the RH, since if $M_2(x)=O(x^{\delta})$ where $\frac{1}{4}<\delta<\frac{1}{2},$ we have
$$\int_{1}^{\infty}M_2(t)M_2(t)\frac{dt}{t^2}=O\left(\int_{1}^{\infty}t^{2\delta-2}dt\right)=O(1).$$ If we only had $\delta\ge\frac{1}{2}$ then the integral would diverge.

\section{Conclusions}
First, we believe we have given a new route in exploring the applications of Fourier-type integrals in relation to RH criteria. Second, in light of our Theorem 2.1, it would seem to be of interest to explore possible connections with the weak Merten's conjecture [8, eq.(14.29.1)],
$$\int_{1}^{X^2}\left(\frac{M_2(t)}{\sqrt{t}}\right)^2dt=\int_{1}^{X}\left(\frac{M(t)}{t}\right)^2dt=O(\log(X)).$$

1390 Bumps River Rd. \\*
Centerville, MA
02632 \\*
USA \\*
E-mail: alexpatk@hotmail.com

\end{document}